\newtheorem{theorem}{Theorem}[section]
\newtheorem{lemma}[theorem]{Lemma}
\newtheorem{corollary}[theorem]{Corollary}
\newtheorem{definition}[theorem]{Definition}
\newtheorem{remark}[theorem]{Remark}
\newtheorem{proposition}[theorem]{Proposition}
\numberwithin{subcase}{case}
\DeclareMathOperator{\Irr}{Irr}
\DeclareMathOperator{\PSL}{PSL}
\DeclareMathOperator{\SU}{SU}
\DeclareMathOperator{\SL}{SL}
\DeclareMathOperator{\PSU}{PSU}
\begin{document}
\title{Orthogonal Determinants of $\SL_3$(q) and $\SU_3$(q)}
\author{Linda Hoyer\footnote{linda.hoyer@rwth-aachen.de} \ and Gabriele Nebe\footnote{nebe@math.rwth-aachen.de}}
\date{Lehrstuhl f\"ur Algebra und Zahlentheorie, RWTH Aachen University, Germany}
\maketitle
\begin{abstract}
We give a full list of the orthogonal determinants of the even degree indicator '+' ordinary irreducible characters of $\mathrm{SL}_3(q)$ and $\mathrm{SU}_3(q)$.
\\
 {\sc Keywords}:  Orthogonal representations, invariant quadratic forms, generic orthogonal character table, finite groups of Lie type. 
{\sc MSC}: 20C15, 11E12.
\end{abstract}

\section{Introduction}
Let $G$ be a finite group and $\rho:G \to \mathrm{GL}_n(\mathbb{C})$ be a representation. We call $\rho$ an orthogonal representation, if there is a symmetric, non-degenerate, $\rho(G)$-invariant bilinear form $\beta$ on ${\mathbb C}^n$.
It is well-known that being an irreducible orthogonal representation is equivalent to the associated character having Frobenius-Schur indicator '+', i.e. $\rho$ being equivalent to a  real representation. A character is called orthogonal if it is the character afforded by an orthogonal representation. The character $\chi $ is orthogonal, if and only if it is of the form 
\begin{equation}\label{characterform}
\chi=\sum_{i=1}^r a_i \chi^{(+)}_i + 2\sum_{j=1}^s b_i \chi^{(-)}_j + \sum_{k=1}^t c_i (\chi^{(0)}_k+ \overline{\chi_k}^{(0)}),
\end{equation}
where $\chi_i^{(+)}$ (resp. $\chi^{(-)}_j$, resp. $\chi^{(0)}_k$) are irreducible characters of $G$ with Frobenius-Schur indicator '+' (resp. '-', resp. '0'), and $a_i, b_j, c_k$ are non-negative integers.

Let $\chi$ be an orthogonal character as in equation \eqref{characterform} 
and let $K=\mathbb{Q}(\chi)$ be the character field of $\chi$. The main result of  \cite{NebeOrtDet} shows that if the degree of all $\chi_i^{+}$ is even then there is a unique element $\det(\chi) \coloneqq d \in K^{\times}/(K^{\times})^2$, called the \textit{orthogonal determinant} of $\chi$, such that for all representations $\rho:G \to \mathrm{GL}_n(L)$ over a field extension $L \supseteq K$ affording $\chi$  all non-degenerate, $\rho(G)$-invariant, symmetric  bilinear forms $\beta $ on $L^n$ have the same determinant 
$$\det(\beta )=d \cdot (L^{\times})^2 \in L^{\times}/(L^{\times})^2.$$  We call such a character \textit{orthogonally stable}.

The orthogonal determinant of  $2\chi_j^{(-)}$ is always $1$ (see \cite{zbMATH00238145}) 
and for characters of the form 
$\chi_k^{(0)}+\overline{\chi_k}^{(0)}$ the orthogonal determinant can be obtained from the character values as given in Lemma \ref{complex}.
So it remains to  deal with the indicator '+' part.
Put $$ \mathrm{Irr}^+(G) = \{\chi \in \mathrm{Irr}(G) \mid  \chi \ \text{is an orthogonal character of even degree} \}.$$
In a long term project theoretical and computational methods are used to calculate the 
orthogonal determinants of the small finite simple groups (see \cite{orthatlas} for a survey).
The goal of this paper is to determine the orthogonal determinants 
of the characters in $\mathrm{Irr}^+(G)$ for the two infinite series of 
finite groups of Lie type, $G=\mathrm{SL}_3(q)$ and $G=\mathrm{SU}_3(q)$, for all prime powers $q$. 
For $\mathrm{SL}_2(q)$ the orthogonal character table is already computed in \cite{SL2}.  
An important subgroup to analyse the structure and the representations of a finite group $G$ of Lie type is its standard Borel subgroup $B$. A character $\chi \in \Irr ^+(G) $ is called
Borel stable, if the restriction of $\chi $ to $B$ is orthogonally stable. 
The structure of $B$ as a semidirect product of a $p$-group and 
a split torus $T$ allows us to determine the orthogonal determinants of all Borel stable characters of $G$
(see Remark \ref{RemChiTChiU}). For the groups 
$G=\mathrm{SL}_3(q)$ and $G=\mathrm{SU}_3(q)$ it turns out that 
those $\chi \in \Irr^+(G)$ that are not Borel stable 
occur nicely in a permutation character. 
For such characters Lemma \ref{Permutation repr} 
can be used to determine their orthogonal determinants. 

This paper is a contribution to 
Project-ID 286237555 – TRR 195 – by the Deutsche Forschungsgemeinschaft
(DFG, German Research Foundation).

\section{Methods}
This section collects some basic results about orthogonal determinants.

\begin{definition}
Let $G$ be a finite group, $H \subseteq G$ a subgroup, and let $\chi$ be an orthogonal character of $G$.  Then $\chi$ is called \textit{$H$-stable} 
if the restriction $\mathrm{Res}^G_H(\chi)$  of $\chi $ to $H$ is an orthogonally stable character of $H$. 
\end{definition}

\begin{lemma} (see \cite[Proposition 5.17 and Remark 5.21]{Nebe2022OrthogonalS} for a more general statement of (ii))
\label{Sum-Lemma}
\begin{enumerate}[label=(\roman*)]
    \item If $\chi$ is $H$-stable, then $\chi $ is orthogonally stable and $$\det(\chi)=\det(\mathrm{Res}^G_H(\chi)) \cdot (\mathbb{Q}(\chi)^{\times})^2.$$
    \item If $\chi=\sum_{i=1}^k \chi_i$ is the sum of orthogonally stable characters $\chi _i$ 
    then $\chi $ is orthogonally stable. 
Moreover if $\mathbb{Q}(\chi_i) \subseteq \mathbb{Q}(\chi)$ for all $i$ then $$\det(\chi) = \prod_{i=1}^k \det(\chi_i) \cdot (\mathbb{Q}(\chi)^{\times})^2.$$
\end{enumerate}
\end{lemma}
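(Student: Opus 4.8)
The plan is to reduce both statements to the defining property of the orthogonal determinant recalled above: for an orthogonally stable character $\psi$ with character field $\mathbb{Q}(\psi)$, the class $\det(\psi) \in \mathbb{Q}(\psi)^{\times}/(\mathbb{Q}(\psi)^{\times})^2$ is the \emph{unique} class whose image in $L^{\times}/(L^{\times})^2$ equals $\det(\beta)$ for every field $L \supseteq \mathbb{Q}(\psi)$, every representation over $L$ affording $\psi$, and every non-degenerate invariant symmetric form $\beta$. For (i), write $\psi := \mathrm{Res}^G_H(\chi)$ and note first that $\mathbb{Q}(\psi) \subseteq \mathbb{Q}(\chi) =: K$, since the values of $\psi$ form a subset of the values of $\chi$; in particular $\det(\psi)$ already defines a class in $K^{\times}/(K^{\times})^2$. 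Now fix any $L \supseteq K$ and any representation $\rho : G \to \mathrm{GL}_n(L)$ affording $\chi$. Its restriction $\rho|_H$ affords $\psi$, and every non-degenerate $\rho(G)$-invariant symmetric form $\beta$ on $L^n$ is in particular $\rho|_H(H)$-invariant. Such a $\beta$ exists because $\chi$ is an orthogonal character, and since $\psi$ is orthogonally stable and $L \supseteq K \supseteq \mathbb{Q}(\psi)$, all such $\beta$ share the determinant $\det(\psi)\cdot(L^{\times})^2$. As this holds over every $L \supseteq K$, the character $\chi$ is orthogonally stable, and by the uniqueness recalled above $\det(\chi) = \det(\psi)\cdot(K^{\times})^2$.

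For (ii) I would first settle orthogonal stability of $\chi$ using the criterion of \cite{NebeOrtDet}, namely that a character is orthogonally stable precisely when all its indicator $+$ constituents have even degree. As the multiset of indicator $+$ constituents of $\chi = \sum_i \chi_i$ is the union of those of the individually stable $\chi_i$, all of them have even degree, so $\chi$ is orthogonally stable. For the determinant, the hypothesis $\mathbb{Q}(\chi_i) \subseteq \mathbb{Q}(\chi) = K$ is exactly what makes $\prod_i \det(\chi_i)$ a well-defined class in $K^{\times}/(K^{\times})^2$. To identify it with $\det(\chi)$, I would choose a splitting field $L \supseteq K$, over which each $\chi_i$ is realizable, and decompose the $L[G]$-module $M_\chi$ affording $\chi$ as $M_\chi \cong \bigoplus_i M_{\chi_i}$ with $M_{\chi_i}$ affording $\chi_i$; this is legitimate because $\chi - \chi_i$ is again a character, so each $M_{\chi_i}$ occurs as an $L[G]$-direct summand. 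Choosing a non-degenerate invariant symmetric form $\beta_i$ on each $M_{\chi_i}$ (which exists since $\chi_i$ is orthogonal and realizable over $L$) and forming the orthogonal direct sum $\beta = \bigoplus_i \beta_i$ yields $\det(\beta) = \prod_i \det(\beta_i)$, and stability of each $\chi_i$ gives $\det(\beta_i) = \det(\chi_i)\cdot(L^{\times})^2$. Hence $\det(\beta) = \prod_i \det(\chi_i)\cdot(L^{\times})^2$, while stability of $\chi$ gives $\det(\beta) = \det(\chi)\cdot(L^{\times})^2$.

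The main obstacle is the final descent: the preceding step only proves the identity in $L^{\times}/(L^{\times})^2$ for a large splitting field $L$, whereas the claim is an equality of classes in $K^{\times}/(K^{\times})^2$, and the natural map $K^{\times}/(K^{\times})^2 \to L^{\times}/(L^{\times})^2$ is in general not injective. This is exactly where the hypothesis $\mathbb{Q}(\chi_i) \subseteq K$ becomes indispensable: it guarantees that both $\det(\chi)$ and $\prod_i \det(\chi_i)$ are represented by elements of $K^{\times}$, so that, running the construction over every $L \supseteq K$, both classes induce the same compatible family $\{\det(\beta)\cdot(L^{\times})^2\}_L$ of form determinants. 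The well-definedness statement of \cite{NebeOrtDet}, which pins down $\det(\chi)$ as the unique class in $K^{\times}/(K^{\times})^2$ producing this family, then forces $\det(\chi) = \prod_i \det(\chi_i)\cdot(K^{\times})^2$. I expect the careful verification that the module decomposition and the choice of forms can be arranged compatibly over a cofinal system of fields, so that the descent through this uniqueness is valid, to be the only genuinely delicate point; everything else is the multiplicativity of the determinant under orthogonal direct sums.
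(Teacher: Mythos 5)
First, note that the paper does not prove this lemma at all: it is imported verbatim from \cite{Nebe2022OrthogonalS} (Proposition 5.17 and Remark 5.21), so your attempt can only be judged against the statement and the background results it rests on. Your part (i) is essentially correct and is the expected argument: any non-degenerate $\rho(G)$-invariant symmetric form is in particular invariant under $\rho(H)$, the restricted representation affords $\psi=\mathrm{Res}^G_H(\chi)$ over the same field $L \supseteq K \supseteq \mathbb{Q}(\psi)$, so stability of $\psi$ forces every such form to have determinant $\det(\psi)\cdot (L^{\times})^2$, and the uniqueness in the main theorem of \cite{NebeOrtDet} identifies this class with $\det(\chi)$. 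The one loose end is your claim that well-definedness of the determinants over every $L$ by itself yields orthogonal stability of $\chi$; under the standard definition (all indicator '$+$' constituents of even degree) you should instead argue that an odd-degree indicator '$+$' constituent of $\chi$ would restrict to an odd-degree orthogonal character of $H$, which necessarily contains an odd-degree indicator '$+$' constituent, contradicting the stability of $\psi$. The stability assertion in (ii) is fine by exactly the constituent-counting you give.

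The determinant formula in (ii), however, has a genuine gap at precisely the point you flag as ``delicate'' but expect to be routine, and your proposed fix does not close it. Your construction establishes $\det(\chi) \equiv \prod_i \det(\chi_i)$ in $L^{\times}/(L^{\times})^2$ only for those fields $L \supseteq K$ over which \emph{each summand} $\chi_i$ is separately realizable. The uniqueness statement you invoke characterizes $\det(\chi)$ by its behaviour over \emph{all} fields $L \supseteq K$ carrying a representation affording $\chi$, and this is a strictly larger class of fields: because of Schur indices, $\chi$ can be realizable where the $\chi_i$ are not. Concretely, if $\psi$ is an indicator '0' irreducible with Schur index $2$ over $K=\mathbb{Q}(\psi+\overline{\psi})$, take $\chi_1=\chi_2=\psi+\overline{\psi}$; then $\mathbb{Q}(\chi_i)=\mathbb{Q}(\chi)=K$, each $\chi_i$ is orthogonally stable, but $\chi=2(\psi+\overline{\psi})$ is afforded by an irreducible $K[G]$-module having no summand that affords $\chi_1$, so your decomposition is unavailable over $L=K$ and over any field not splitting the relevant division algebra. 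Since the kernel of $K^{\times}/(K^{\times})^2 \to L^{\times}/(L^{\times})^2$ is nontrivial in general, agreement of the two square classes over your restricted family of fields does not formally imply their equality in $K^{\times}/(K^{\times})^2$, so the appeal to uniqueness is not legitimate as stated.

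What is actually needed is the fact that the fields realizing all the $\chi_i$ still detect square classes of $K$: for every non-square $c \in K^{\times}$ one must produce such a field in which $c$ remains a non-square. Over the number fields arising as character fields this is true, but it is a genuine arithmetic argument (e.g.\ choose a place of $K$ at which $c$ is locally a non-square and all relevant division algebras are split, then build a splitting field with trivial local degree there via local--global theory), not a formal consequence of the uniqueness statement. This arithmetic descent is exactly the content hidden in the cited Proposition 5.17/Remark 5.21, and it is the missing ingredient in your proposal.
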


The paper \cite{10.51286/albjm/1658730113} gives an easy formula for 
the orthogonal determinant of orthogonally stable characters of $p$-groups.
We only need the following special case:

\begin{lemma} (see \cite[Corollary 4.4]{10.51286/albjm/1658730113})
\label{p group Prop}
Let $p$ be an odd prime and let $\chi $ be an orthogonally stable rational character of a finite 
$p$-group. Then $p-1$ divides $\chi (1)$ and 
     $\det(\chi)=p^{\chi(1)/(p-1)} \cdot (\mathbb{Q}^{\times})^2$.
\end{lemma}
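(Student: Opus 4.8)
The plan is to reduce the statement to a single Galois orbit of irreducible characters and there to compute a local orthogonal determinant. First I would record the structural consequences of $P$ being a group of odd order. By Brauer's permutation lemma the Galois group $\Gamma=(\mathbb{Z}/p^k\mathbb{Z})^\times$, acting on $\Irr(P)$ by $\psi\mapsto\psi^{\sigma_t}$ with $\psi^{\sigma_t}(g)=\psi(g^t)$ and on conjugacy classes by $[g]\mapsto[g^t]$, fixes the same number of irreducible characters as classes for each $t$. For $t\not\equiv 1\pmod p$ no nonidentity class is fixed: if $xgx^{-1}=g^t$ then $g\mapsto g^t$ lies in $N_P(\langle g\rangle)/C_P(\langle g\rangle)$, a $p$-group, whereas the order of $t$ in $(\mathbb{Z}/|g|\mathbb{Z})^\times$ is divisible by a prime dividing $p-1$; hence only the identity class, and so only the trivial character, is fixed by $\sigma_t$. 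In particular every nontrivial $\psi\in\Irr(P)$ has indicator $0$ (take $t=-1$, which is $\not\equiv 1\bmod p$), and $\mathrm{Stab}_\Gamma(\psi)$ is contained in the Sylow $p$-subgroup $\{t\equiv 1\bmod p\}$ of $\Gamma$. Reading off fixed fields, this gives $\mathbb{Q}(\psi)=\mathbb{Q}(\zeta_{p^m})$ for some $m\ge 1$, so $[\mathbb{Q}(\psi):\mathbb{Q}]=\varphi(p^m)=p^{m-1}(p-1)$.

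Next I would set up the reduction. Since $\chi$ is orthogonally stable it has no odd-degree indicator-$+$ constituent, and as the trivial character is the only indicator-$+$ (indeed only real) irreducible of $P$, it cannot occur in $\chi$. Hence $\chi=\bar\chi$ is a sum over the Galois orbits $O$ of indicator-$0$ characters, $\chi=\sum_O m_O\,\eta_O$ with $\eta_O=\sum_{\psi\in O}\psi$ rational. Each $\eta_O$ is an orthogonal character with no indicator-$+$ constituent, hence orthogonally stable, and $\mathbb{Q}(\eta_O)=\mathbb{Q}=\mathbb{Q}(\chi)$, so Lemma~\ref{Sum-Lemma}(ii) yields $\det(\chi)=\prod_O\det(\eta_O)^{m_O}\cdot(\mathbb{Q}^\times)^2$. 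Since $\varphi(p^m)=p^{m-1}(p-1)$ divides $\eta_O(1)=|O|\,\psi(1)$, we already get $(p-1)\mid\chi(1)$, and the exponent satisfies $\eta_O(1)/(p-1)=p^{m-1}\psi(1)$, which is \emph{odd} because $\psi(1)$ is a power of $p$. It therefore remains to show $\det(\eta_O)\equiv p\pmod{(\mathbb{Q}^\times)^2}$ for each orbit; multiplying then gives $\det(\chi)\equiv p^{\sum_O m_O}\equiv p^{\chi(1)/(p-1)}$, since $\chi(1)/(p-1)=\sum_O m_O\,p^{m-1}\psi(1)\equiv\sum_O m_O\pmod 2$.

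For the single-orbit determinant I would argue locally. The orthogonal determinant of a character of $G$ is unramified away from the primes dividing $|G|$, so $\det(\eta_O)$ is a unit square at every prime $\ell\neq p$; and as $\eta_O$ has indicator $0$ it carries a positive definite invariant symmetric form over $\mathbb{R}$, so $\det(\eta_O)$ is positive. Thus $\det(\eta_O)\equiv p^{e}$ with $e\in\{0,1\}$, and it suffices to determine $e\equiv v_p(\det(\eta_O))\pmod 2$. By Roquette's theorem $\psi$ has Schur index $1$, so $\eta_O$ is afforded by a simple $\mathbb{Q}P$-module $W$ with $\mathrm{End}_{\mathbb{Q}P}(W)=F=\mathbb{Q}(\zeta_{p^m})$, and the invariant $\mathbb{Q}$-symmetric forms are the transfers $\mathrm{Tr}_{F/\mathbb{Q}}\circ h$ of the invariant $F/F^+$-Hermitian form $h$ (this is the content of Lemma~\ref{complex}). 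Computing the $p$-adic valuation of such a transfer, I expect $v_p(\det(\eta_O))\equiv \psi(1)\cdot v_p(\mathrm{disc}\,\mathbb{Q}(\zeta_{p^m}))\pmod 2$; since $v_p(\mathrm{disc}\,\mathbb{Q}(\zeta_{p^m}))=p^{m-1}\bigl(m(p-1)-1\bigr)$ is odd and $\psi(1)$ is odd, this forces $e=1$, i.e.\ $\det(\eta_O)\equiv p$.

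The main obstacle is precisely this last, $p$-adic, step: relating the valuation of the transferred Hermitian form over the totally ramified extension $\mathbb{Q}(\zeta_{p^m})/\mathbb{Q}$ to the cyclotomic discriminant and verifying the parity. Everything before it is formal — Brauer's permutation lemma, the Galois theory of $\mathbb{Q}(\zeta_{p^k})$, and Lemma~\ref{Sum-Lemma} — and the reduction to $\det(\eta_O)\equiv p$ only uses that each per-orbit exponent $p^{m-1}\psi(1)$ is odd. I would therefore concentrate on a clean transfer-determinant computation at $p$, the unique ramified prime, to confirm that the invariant form has odd $p$-valuation and hence orthogonal determinant $p$ modulo squares.
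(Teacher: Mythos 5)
First, a point of comparison: the paper does not prove this lemma at all --- it is imported wholesale from the cited reference (Corollary 4.4 there), so your attempt must stand as a self-contained argument. Your reductions are correct and cleanly organized: the Brauer permutation lemma argument (only the trivial character of a $p$-group, $p$ odd, is real, and $\mathbb{Q}(\psi)=\mathbb{Q}(\zeta_{p^m})$ for nontrivial $\psi$), the orbit decomposition $\chi=\sum_O m_O\eta_O$ combined with Lemma~\ref{Sum-Lemma}(ii), the divisibility $(p-1)\mid\chi(1)$, and the parity bookkeeping reducing everything to the single claim $\det(\eta_O)=p\cdot(\mathbb{Q}^\times)^2$. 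But that single claim is exactly where your proof stops: you state an \emph{expected} congruence $v_p(\det\eta_O)\equiv\psi(1)\,v_p(\mathrm{disc}\,\mathbb{Q}(\zeta_{p^m}))\pmod 2$ and say you would concentrate on verifying it. That congruence is the entire mathematical content of the lemma --- if the valuation were even, the determinant would be $1$, not $p$ --- so as written this is a plan with a hole at its centre, not a proof. Two further ingredients are also invoked without proof: that $\det(\eta_O)$ is a unit square at every prime $\ell\neq p$ (true, but a genuine theorem, not a formality), and the identification of the invariant $\mathbb{Q}$-bilinear forms on the simple $\mathbb{Q}P$-module with trace transfers of $F/F^+$-Hermitian forms. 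The latter is \emph{not} ``the content of Lemma~\ref{complex}'': that lemma computes $\det(\psi+\overline{\psi})$ over the real character field $K=\mathbb{Q}(\psi+\overline{\psi})$ and says nothing about the rational module or its endomorphism algebra.

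The gap is fillable, and in a way that makes your unramifiedness and positivity steps unnecessary. Scharlau's transfer formula says that for a nondegenerate symmetric form $b$ of rank $r$ over a number field $K$, $\det(\mathrm{Tr}_{K/\mathbb{Q}}\circ b)=N_{K/\mathbb{Q}}(\det b)\cdot \mathrm{disc}(K)^{r}\cdot(\mathbb{Q}^\times)^2$. Granting the Hermitian description of the invariant forms on $W$, apply this with $K=F^+$ and $b=\mathrm{Tr}_{F/F^+}\circ h$: this $b$ has rank $2\psi(1)$ and determinant $\delta^{\psi(1)}$ modulo squares, where $F=F^+[\sqrt{-\delta}]$ (this \emph{is} the computation behind Lemma~\ref{complex}, done one diagonal entry at a time). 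Since $\psi(1)$ is odd and $\mathrm{disc}(F^+)$ enters with even exponent, you get $\det(\eta_O)=N_{F^+/\mathbb{Q}}(\delta)\cdot(\mathbb{Q}^\times)^2$, and with $\delta=-(\zeta-\zeta^{-1})^2$ a direct cyclotomic norm computation gives $N_{F^+/\mathbb{Q}}(\delta)=|N_{F/\mathbb{Q}}(\zeta-\zeta^{-1})|=|N_{F/\mathbb{Q}}(1-\zeta^{2})|=\Phi_{p^m}(1)=p$, since $\zeta\mapsto\zeta^2$ permutes the primitive $p^m$-th roots of unity. (Your expected parity was right, but the discriminant of $F$ never needs to appear.) One more caution that shows why this rational-level computation cannot be waved away: no argument carried out purely in square classes of $F^+$, or of any field containing it, can decide the issue, because for $p\equiv 1\pmod 4$ one has $\sqrt{p}\in F^+$ and the classes of $1$ and $p$ coalesce there; the determinant must be pinned down over $\mathbb{Q}$ itself (or at enough completions), which is exactly what the transfer computation does.
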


\begin{lemma}  (see \cite[Proposition 3.12]{Nebe2022OrthogonalS})
\label{complex}
Let $\psi=\chi+\overline{\chi}$ for some indicator '0' irreducible character $\chi$. Let $K=\mathbb{Q}(\psi)$, $L=\mathbb{Q}(\chi)$. Then $K$ is the maximal real subfield of the complex field $L$. So there is a totally positive  $\delta \in K$ such that $L=K[\sqrt{-\delta}]$. Then
$$
\det(\psi)=\delta^{\chi(1)} \cdot (K^{\times})^2.
$$
For the cyclotomic fields
$L=\mathbb{Q}(\mu_{m}^j)$, 
where $ 
\mu_m \coloneqq \exp \left( \frac{2\pi i}{m} \right) \in \mathbb{C} 
$, we get 
$$K=\mathbb{Q}(\vartheta_{m}^{(j)}) \mbox{ with }
\vartheta_m^{(j)} \coloneqq \mu_m^j+\mu_m^{-j} \in \mathbb{R} $$  
and can choose $$\delta=2-\vartheta^{(2j)}_m=-(\mu^j_m-\mu^{-j}_m)^2.$$ 
\end{lemma}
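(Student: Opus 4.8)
The plan is to realise $\psi$ by restriction of scalars from $L$ to $K$ and to compute the determinant of the resulting invariant symmetric form through an explicit Gram matrix. First I would record the field-theoretic setup: since $\chi$ has indicator $0$ we have $\chi \neq \overline{\chi}$, so complex conjugation $\sigma$ acts nontrivially on $L=\mathbb{Q}(\chi)$ with fixed field exactly $K=\mathbb{Q}(\psi)$, whence $[L:K]=2$ and $K$ is the maximal real subfield of $L$. Because the values of $\psi$ are real cyclotomic integers, $K$ is an abelian, hence totally real, number field, and $L$ is a totally imaginary quadratic extension (a CM field); therefore $L=K[\omega]$ with $\omega=\sqrt{-\delta}$, $\sigma(\omega)=-\omega$, for some totally positive $\delta\in K$. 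This already yields the first assertion.

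Next I would produce the form. Fixing a representation $\rho$ over $L$ affording $\chi$, both the contragredient $\rho^{\ast}$ and the $\sigma$-twist ${}^{\sigma}\!\rho$ afford $\overline{\chi}$, so $\rho\cong{}^{\sigma}(\rho^{\ast})$; by Schur's Lemma and a Hilbert~90 rescaling there is a nondegenerate $G$-invariant $\sigma$-Hermitian form $h$ on $L^{n}$, where $n=\chi(1)$. Viewing $L^{n}$ as a $2n$-dimensional $K$-space realises $\psi$, and $b\coloneqq\mathrm{Tr}_{L/K}\circ h$ is a nondegenerate, $G$-invariant, symmetric $K$-bilinear form, so by orthogonal stability its determinant computes $\det(\psi)$. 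Choosing an $L$-basis with Gram matrix $H=A+\omega B$ (so $A$ is symmetric and $B$ antisymmetric, as $H$ is Hermitian) and passing to the $K$-basis $e_{1},\dots,e_{n},\omega e_{1},\dots,\omega e_{n}$, a direct calculation gives the Gram matrix
$$
M=\begin{pmatrix} 2A & 2\delta B \\ -2\delta B & 2\delta A \end{pmatrix}.
$$

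The heart of the argument is then the determinant of $M$. Pulling the scalars $2$ and $\delta$ out of rows leaves $\det(M)=(2^{n})^{2}\,\delta^{\,n}\det\!\begin{pmatrix} A & \delta B \\ -B & A\end{pmatrix}$, and I would identify the remaining block determinant with that of the realification of the $L$-linear map $H$, whose $K$-matrix is $\begin{pmatrix} A & -\delta B \\ B & A\end{pmatrix}$ (the two differ by an even number of row/column sign changes). That realification determinant is $\mathrm{Nm}_{L/K}(\det\nolimits_{L}H)$, and since $H$ is Hermitian $\det_{L}H$ is fixed by $\sigma$, so $\mathrm{Nm}_{L/K}(\det\nolimits_{L}H)=(\det\nolimits_{L}H)^{2}$ is a square in $K$. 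Hence $\det(M)\equiv\delta^{\,n}\pmod{(K^{\times})^{2}}$, i.e.\ $\det(\psi)=\delta^{\chi(1)}\cdot(K^{\times})^{2}$. The main obstacle is precisely this last point: over $L$ the representation splits as $\rho\oplus\overline{\rho}$ and $b$ becomes the hyperbolic pairing of two dual spaces, of determinant $(-1)^{n}\equiv\delta^{\,n}\bmod(L^{\times})^{2}$, but this only controls the class modulo $(L^{\times})^{2}$; the force of the Hermitian hypothesis is that it pins the cross term down to a genuine square in $K$, which is exactly what the norm computation delivers.

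Finally, for the cyclotomic case I would verify the proposed $\delta$. With $L=\mathbb{Q}(\mu_{m}^{j})$ and $K=\mathbb{Q}(\vartheta_{m}^{(j)})$, the element $\omega=\mu_{m}^{j}-\mu_{m}^{-j}$ satisfies $\sigma(\omega)=-\omega$ and $\omega^{2}=\vartheta_{m}^{(2j)}-2=-\delta$, and $K[\omega]=L$ because $\mu_{m}^{j}=\tfrac12(\vartheta_{m}^{(j)}+\omega)$; moreover $\delta=|\mu_{m}^{j}-\mu_{m}^{-j}|^{2}$ and every Galois conjugate of $\delta$ has the same shape $4\sin^{2}(\cdot)>0$, so $\delta$ is totally positive. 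This matches $\delta=2-\vartheta_{m}^{(2j)}=-(\mu_{m}^{j}-\mu_{m}^{-j})^{2}$ and completes the proof.
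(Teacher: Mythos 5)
You should first note that the paper contains no proof of this lemma to compare against: it is quoted directly from \cite[Proposition 3.12]{Nebe2022OrthogonalS}. Judged on its own terms, your strategy --- restrict scalars from $L$ to $K$, transfer a $G$-invariant $\sigma$-Hermitian form $h$ to the symmetric $K$-bilinear form $\mathrm{Tr}_{L/K}\circ h$, and compute its Gram determinant --- is the natural and standard route to this statement, and your computations check out: the CM-field setup, the Gram matrix (your sign convention for which argument of $h$ is conjugate-linear is immaterial), the reduction $\det(M)=2^{2n}\delta^{n}\,\mathrm{Nm}_{L/K}(\det\nolimits_{L}H)$ together with $\mathrm{Nm}_{L/K}(\det\nolimits_{L}H)=(\det\nolimits_{L}H)^{2}$ because $\det\nolimits_{L}H$ is $\sigma$-fixed, and the cyclotomic specialization are all correct. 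Your closing remark about why the Hermitian structure is needed to pin down the class in $K^{\times}/(K^{\times})^{2}$ rather than merely in $L^{\times}/(L^{\times})^{2}$ shows you see exactly where the content lies. (The assertion that complex conjugation has fixed field exactly $K$ deserves its one-line justification --- any $\tau\in\mathrm{Gal}(L/K)$ permutes $\{\chi,\overline{\chi}\}$ --- but that is standard.)

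There is, however, one genuine gap relative to the lemma as stated: the very first step, ``fixing a representation $\rho$ over $L$ affording $\chi$'', tacitly assumes that $\chi$ has Schur index $1$ over its character field, a hypothesis the lemma does not make. Indicator '0' irreducible characters with Schur index $2$ exist: for $G=Q_{8}\times C_{7}$ the faithful degree-$2$ characters have indicator '0', and their simple component is $D\otimes_{\mathbb{Q}}\mathbb{Q}(\mu_{7})$, where $D$ is the definite rational quaternion algebra; this stays a division algebra because the primes of $\mathbb{Q}(\mu_{7})$ above $2$ have odd local degree. For such $\chi$ there is no representation over $L$ affording $\chi$, no invariant Hermitian form on $L^{n}$ (only one over a division algebra with involution of the second kind), and the $K$-irreducible module involving $\chi$ has character $2\psi$ rather than $\psi$; nor can one identify $\det(\psi)\in K^{\times}/(K^{\times})^{2}$ by computing over a single splitting field $L'$, since $K^{\times}/(K^{\times})^{2}\to L'^{\times}/(L'^{\times})^{2}$ need not be injective. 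So your argument proves the lemma only under the extra hypothesis $m_{K}(\chi)=1$, and the general case needs an additional descent argument (or the citation). Within this paper the restriction happens to be invisible: by Gow's theorem quoted in Section 3, all characters of $\SL_3(q)$ and $\SU_3(q)$ have Schur index $1$ except a single indicator '-' character, to which this lemma is never applied.
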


\begin{lemma}
\label{Permutation repr}
Let $G$ be a finite group acting on a finite set $M$. Let $V$ be the permutation representation over $\mathbb{Q}$. Define the $G$-invariant 
bilinear form $\beta :V \times V \to \mathbb{Q}$ by choosing $M$ to be an orthonormal basis. 
Then $V_1=\langle \sum_{m \in M} m \rangle$  and $V_1^{\bot}$ are $G$-invariant subspaces 
and  $$ \det(\beta _{| V_1^{\bot}})=|M| \cdot (\mathbb{Q}^{\times})^2.$$
\end{lemma}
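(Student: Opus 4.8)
The plan is to realise $V$ as an orthogonal direct sum $V = V_1 \perp V_1^\perp$ and then to use the multiplicativity of the determinant of a symmetric bilinear form under orthogonal decomposition, reducing everything to the trivial observation that the Gram matrix of $\beta$ with respect to the basis $M$ is the identity.

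First I would record the structural facts. Since $G$ permutes the elements of $M$, it permutes the basis vectors, so the vector $v_1 \coloneqq \sum_{m \in M} m$ is fixed by $G$; hence $V_1 = \langle v_1 \rangle$ is $G$-invariant. The form $\beta$ is $G$-invariant (as already built into its definition) because the matrices representing $G$ with respect to $M$ are permutation matrices, which are orthogonal for the orthonormal basis $M$. Consequently $V_1^\perp$ is $G$-invariant as well. Next I would observe that $\beta(v_1,v_1) = |M| \neq 0$, so $V_1$ is non-degenerate and $V = V_1 \perp V_1^\perp$ is an orthogonal direct sum, with $V_1^\perp$ non-degenerate too.

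Then comes the determinant computation. With respect to the orthonormal basis $M$ the Gram matrix of $\beta$ is the identity, so $\det(\beta) = 1 \cdot (\mathbb{Q}^\times)^2$. Choosing instead a basis adapted to the decomposition $V = V_1 \perp V_1^\perp$ makes the Gram matrix block diagonal, with the $1 \times 1$ block $[\,|M|\,]$ coming from $V_1$ and the Gram matrix of $\beta_{|V_1^\perp}$ as the other block. Since a change of basis alters the determinant only by a square, multiplicativity over the two blocks gives, in $\mathbb{Q}^\times/(\mathbb{Q}^\times)^2$,
$$ 1 = \det(\beta) = \det(\beta_{|V_1}) \cdot \det(\beta_{|V_1^\perp}) = |M| \cdot \det(\beta_{|V_1^\perp}). $$
Solving yields $\det(\beta_{|V_1^\perp}) = |M|^{-1} \cdot (\mathbb{Q}^\times)^2 = |M| \cdot (\mathbb{Q}^\times)^2$, using $|M|^{-1} = |M| \cdot |M|^{-2}$.

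I do not expect a serious obstacle here; the only thing to handle carefully is the square-class bookkeeping — namely that the determinant of the form is well defined only modulo $(\mathbb{Q}^\times)^2$, that it is multiplicative over an orthogonal direct sum, and the elementary identity $|M|^{-1} \equiv |M| \pmod{(\mathbb{Q}^\times)^2}$ that converts the computed inverse into the asserted value.
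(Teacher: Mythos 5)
Your proof is correct and follows essentially the same route as the paper's: compute $\det(\beta)=1$ from the orthonormal basis $M$, use multiplicativity of the determinant over the orthogonal decomposition $V = V_1 \perp V_1^{\bot}$ together with $\beta\left(\sum_{m\in M} m, \sum_{m \in M} m\right)=|M|$, and conclude modulo squares. The extra details you supply (the $G$-invariance of $V_1$ and $V_1^{\bot}$, the non-degeneracy of $V_1$, and the square-class identity $|M|^{-1}\equiv |M|$) are exactly the steps the paper compresses into ``from which the result follows.''
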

\begin{proof}
It is clear that $\det(\beta )=\det(\beta _{| V_1}) \cdot \det(\beta _{| V_1^{\bot}}) = 1 \cdot (\mathbb{Q}^{\times})^2$ and $$\beta  \left( \sum_{m \in M} m, \sum_{m \in M} m \right) =|M|,$$ from which the result follows.
\end{proof}

\section{The Orthogonal Characters of $\SL_3(q)$ and $\SU_3(q)$}

Let $p$ be a prime and let $q$ be a power of $p$. The group $\mathrm{SL}_3(q)$ is
$$
\mathrm{SL}_3(q) = \{ A \in \mathbb{F}_q^{3 \times 3} | \det(A)=1 \}.
$$
Let $\mathbb{F}_{q^2} \supseteq \mathbb{F}_q$ be the field extension of degree $2$ and let $F:\mathbb{F}_{q^2} \to \mathbb{F}_{q^2}$, $x \mapsto x^q$ be the Frobenius automorphism. For a matrix $A \in \mathbb{F}_{q^2}^{n \times m}$ we define $F(A)$ to be the matrix where we apply $F$ component-wise. Let
$$\Omega= \begin{pmatrix}
0 & 0 & 1 \\
0 & 1 & 0 \\
1 & 0 & 0
\end{pmatrix}$$
and define the Hermitian form $H: \mathbb{F}_{q^2}^{3} \times \mathbb{F}_{q^2}^{3} \to \mathbb{F}_{q^2}$, $H(v,w) =F(v)^{tr} \cdot \Omega \cdot w$. Then
\begin{align*}
    \mathrm{SU}_3(q) = &\{ A \in \mathrm{SL}_3(q^2) | H(A \cdot v, A \cdot w)=H(v,w) \ \text{for all} \ v,w \in \mathbb{F}_{q^2}\}= \\ &\{ A \in \mathrm{SL}_3(q^2)| F(A)^{tr} \cdot \Omega \cdot A = \Omega \}.
\end{align*}

The letter $G$ will always denote one of $\SL_3(q)$ or $\SU_3(q)$. 
The full character table of $G$ was first calculated in \cite{Simpson1973TheCT} in 1973 by Simpson and Frame. By  'Ennola duality' (see  \cite{Ennola} for the statement and \cite{EnnolaProof} for a proof),
$$"\mathrm{SU}_3(q)=\mathrm{SL}_3(-q)",$$
the irreducible characters  of $\mathrm{SU}_3(q)$ can be obtained from the ones of $\mathrm{SL}_3(q)$ by formally replacing every instance of  $q$ by $-q$, so that there is a single generic character table giving the  character table for both groups introducing an additional parameter 
$\varepsilon=+1$ for $G=\mathrm{SL}_3(q)$ and $\varepsilon=-1$ for $G=\mathrm{SU}_3(q)$.

In this notation the center of $G$ is the group of scalar matrices in $G$ and hence 
of order $\gcd(q-\varepsilon , 3)$. In particular the set $\Irr^+(G)$  is the 
set of irreducible orthogonal characters of even degree of the group
\begin{align*}
 \mathrm{PSL}_3(q)=\SL_3(q)/Z(\SL_3(q))  \mbox{ and } 
\mathrm{PSU}_3(q)=\SU_3(q)/Z(\SU_3(q)).
\end{align*}
 It is well known that, with the exception of $\mathrm{PSU}_3(2)$,  the groups $\mathrm{PSL}_3(q)$ and $\mathrm{PSU}_3(q)$ are simple groups for all $q$.
The irreducible characters of $\PSU_3(q)$ and $\PSL_3(q)$ are the irreducible  characters of $G$ that are constant on the center.

Gow \cite{GOW1976102} showed that all the characters of $\mathrm{PSL}_3(q)$ and $\mathrm{PSU}_3(q)$ have Schur index 1, except the unique character of degree $q^2-q$ of $\mathrm{PSU}_3(q)$, which has Schur index 2 and Frobenius-Schur indicator '-'. Additionally, the results in \cite{FieldValues} allow us to obtain the character fields from some combinatorial description.   
For cyclotomic numbers we use the notation from Lemma \ref{complex}
and for the naming convention of the irreducible characters we follow \cite[Table 2]{Simpson1973TheCT}.
Then the set $\mathrm{Irr}^+(G)$ is given as follows:
\begin{theorem}
\label{Irr+ list}
The following table includes all $\chi \in \mathrm{Irr}^+(G)$, their degrees $\chi (1)$  and character fields $\mathbb{Q}(\chi )$:
\begin{center}
\begin{tabular}{|c || c |c |c|} 
 \hline
 $\chi$ & $u$ & $\chi(1)$ & $\mathbb{Q}(\chi)$ \\  
 \hline\hline
 $\chi_{qs}$ & --- & $q(q+\varepsilon)$ & $\mathbb{Q}$ \\
 \hline
$\chi_{q^3}$ & --- & $q^3$ & $\mathbb{Q}$ \\ 
 \hline
$\chi_{st'}^{(u)}$ & $0 \leq u \leq 2 $ & $1/3 (q+\varepsilon)(q^2+\varepsilon q +1)$ & $\mathbb{Q}$ \\
 \hline
 $\chi_{st}^{(u,-u,0)}$ & \begin{tabular}{@{}c@{}} $1 \leq u < q- \varepsilon$, \\ $u \notin \{ (q-\varepsilon)/3, (q-\varepsilon)/2, 2(q-\varepsilon)/3 \}$ \end{tabular} & $(q+\varepsilon)(q^2+\varepsilon q +1)$ & $\mathbb{Q}(\vartheta_{q-\varepsilon}^{(u)})$ \\
 \hline
$\chi_{rt}^{((q-\varepsilon)u)}$ & $ 1 \leq u < q+\varepsilon$ & $(q-\varepsilon)(q^2+\varepsilon q +1)$ & $\mathbb{Q}(\vartheta_{q+\varepsilon}^{(u)})$ \\
 \hline
\end{tabular}
\end{center}
\begin{itemize}
    \item For $q$ odd and $G=\SL_3(q)$, $\mathrm{Irr}^+(G)=\{ \chi_{qs}, \chi_{st'}^{(u)}, \chi_{st}^{(u,-u,0)}, \chi_{rt}^{((q-\varepsilon)u)} \}$.
    \item For $q$ odd and $G=\SU_3(q)$, $\mathrm{Irr}^+(G)=\{\chi_{st'}^{(u)}, \chi_{st}^{(u,-u,0)}, \chi_{rt}^{((q-\varepsilon)u)} \}$.
    \item For $q$ even and $G=\SL_3(q)$, $\mathrm{Irr}^+(G)=\{ \chi_{qs}, \chi_{q^3} \}$.
    \item For $q$ even and $G=\SU_3(q)$, $\mathrm{Irr}^+(G)=\{  \chi_{q^3} \}$.
\end{itemize}
Note that the characters $\chi_{st'}^{(u)}$ only exist for $3 | q- \varepsilon$.
\end{theorem}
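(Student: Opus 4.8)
The plan is to read $\mathrm{Irr}^+(G)$ off the generic character table of \cite{Simpson1973TheCT} by combining, for each irreducible character, three pieces of data: its value on $Z(G)$, its Frobenius--Schur indicator, and the parity of its degree. The first step is a reduction. Since an orthogonal character is real-valued, its central character is real; but $Z(G)$ is cyclic of order $\gcd(q-\varepsilon,3)\in\{1,3\}$, and the only real root of unity whose order divides $3$ is $1$. Hence every $\chi\in\mathrm{Irr}^+(G)$ is trivial on $Z(G)$, so it suffices to classify the even-degree indicator-'+' characters among the irreducible characters of $G$ that are constant on the center, i.e.\ the characters of $\mathrm{PSL}_3(q)$ resp.\ $\mathrm{PSU}_3(q)$.

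Next I would pin down the indicators using Gow \cite{GOW1976102}: every irreducible character of $\mathrm{PSL}_3(q)$ and $\mathrm{PSU}_3(q)$ has Schur index $1$ apart from the single character of degree $q^2-q$ of $\mathrm{PSU}_3(q)$, which has indicator '-'. For a character of Schur index $1$, indicator '+' is equivalent to being real-valued. So, away from that one exceptional character, determining the indicator reduces to the combinatorial question of which characters are real-valued, which I would settle family by family: complex conjugation acts by inverting the underlying semisimple parameter, and writing down the resulting self-conjugacy condition singles out the real members of each family. The exceptional indicator-'-' character is exactly $\chi_{qs}$ in the case $\varepsilon=-1$ (degree $q(q+\varepsilon)=q^2-q$), which explains why $\chi_{qs}$ appears in the $\mathrm{SL}_3$ lists but is absent from the $\mathrm{SU}_3$ lists.

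It then remains to compute degrees and character fields. For each surviving family I would write the degree as a polynomial in $q$ and read off its parity; this is what separates the four bullet cases. For $q$ even the factors $q\pm\varepsilon$ and $q^2+\varepsilon q+1$ are odd, so only $\chi_{qs}$ and $\chi_{q^3}$ have even degree, while for $q$ odd the Steinberg degree $q^3$ becomes odd and drops out but the parametrized families survive. The character fields I would obtain from the combinatorial recipe of \cite{FieldValues}, expressing them through the real cyclotomic fields $\mathbb{Q}(\vartheta_m^{(j)})$ of Lemma \ref{complex}: the fields are $\mathbb{Q}$ for the rational families $\chi_{qs},\chi_{q^3},\chi_{st'}^{(u)}$, and $\mathbb{Q}(\vartheta_{q-\varepsilon}^{(u)})$ resp.\ $\mathbb{Q}(\vartheta_{q+\varepsilon}^{(u)})$ for $\chi_{st}^{(u,-u,0)}$ resp.\ $\chi_{rt}^{((q-\varepsilon)u)}$.

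The main obstacle is the bookkeeping around the degenerate parameter values. The characters $\chi_{st}$ arise from a family labelled by triples of residues modulo $q-\varepsilon$ summing to zero, and the real ones are exactly those of the shape $(u,-u,0)$; I would need to exclude precisely the parameters at which this triple has a repeated entry or is stabilized by the order-$3$ cyclic symmetry, namely $u=(q-\varepsilon)/2$ (where $u=-u$) and, when $3\mid q-\varepsilon$, $u\in\{(q-\varepsilon)/3,2(q-\varepsilon)/3\}$ (where the character splits into the three rational characters $\chi_{st'}^{(0)},\chi_{st'}^{(1)},\chi_{st'}^{(2)}$ of one-third the degree). Verifying that these are exactly the stated exclusions, that the split constituents are again real-valued of even degree and rational, and that no further coincidences or reality failures occur between the finitely many families, is the delicate part; the remainder is a systematic pass through the table, after which the four cases are assembled by intersecting the reality, even-degree and trivial-central-character conditions with the constraints imposed by $\varepsilon$ and the parity of $q$.
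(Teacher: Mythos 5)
Your proposal is correct and takes essentially the same route as the paper: the paper also reduces to $\mathrm{PSL}_3(q)$/$\mathrm{PSU}_3(q)$ via the observation that the center has odd order $\gcd(q-\varepsilon,3)$, invokes Gow \cite{GOW1976102} to convert real-valuedness into indicator '$+$' (with the degree $q^2-q$ character of $\mathrm{PSU}_3(q)$, i.e.\ $\chi_{qs}$ for $\varepsilon=-1$, as the unique indicator '$-$' exception), reads degrees, reality and the degenerate parameter values off the Simpson--Frame table \cite{Simpson1973TheCT}, and takes the character fields from \cite{FieldValues}. The paper states the theorem with no further written proof beyond these citations, so your family-by-family verification (including the correct treatment of the exclusions $u\in\{(q-\varepsilon)/3,(q-\varepsilon)/2,2(q-\varepsilon)/3\}$ and the splitting into the $\chi_{st'}^{(u)}$) just makes that same implicit argument explicit.
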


\section{Results}

Let $G=\mathrm{SL}_3(q)$ or $G=\mathrm{SU}_3(q)$. Let
$$
B \coloneqq \left\{   
\begin{pmatrix}
d & a & b \\
0 & e & c \\
0 & 0 & f
\end{pmatrix} \in G
\right\} \mbox{ and }
U \coloneqq \left\{   
\begin{pmatrix}
1 & a & b \\
0 & 1 & c \\
0 & 0 & 1
\end{pmatrix} \in G
\right\}.
$$
Then $U$ is the unipotent radical of $B$ and a Sylow $p$-subgroup of $G$, and $B=N_G(U) = U \rtimes T$ 
is a (standard) Borel subgroup, 
where $T:=\{\mathrm{diag}(d,e,f)\in G\}$ is a maximal torus. 
Denote by $W=N_G(T)/T$ the Weyl group of $G$.

We need an explicit notation for $\Irr(T)$: 
\\
For $G=\SL_3(q)$ we fix a generator $t$ of $\mathbb{F}_q^{\times }$.
Then the torus $$T = \{ t_{a,b}:=\mathrm{diag} (t^a,t^{-a-b},t^b) \mid  a,b \in \{ 0,\ldots , q-2 \} \}$$ 
is isomorphic to $\mathbb{F}_q^{\times } \times \mathbb{F}_q^{\times } $ and 
\begin{equation} \label{SLTorusIrr} 
\Irr(T) =\{ \alpha _1^{u_1} \alpha_2 ^{u_2} : t_{a,b} \mapsto \mu_{q-1}^{au_1+bu_2} \mid  u_1,u_2 \in \{ 0,\ldots , q-2 \} \} .
\end{equation}
For $G=\SU_3(q)$ the torus is isomorphic to $\mathbb{F}_{q^2}^{\times } =: \langle \tau \rangle $. So
$$ T = \{ \tau _a := \mathrm{diag} (\tau^a, \tau^{(q-1)a} ,\tau^{-qa }) \mid a\in \{0,\ldots , q^2-2 \} \}$$ and 
\begin{equation} \label{SUTorusIrr} 
	\Irr(T) =\{ \alpha ^{u} : \tau_a \mapsto \mu_{q^2-1}^{au} \mid u \in \{0,\ldots , q^2-2 \} \} .
\end{equation}
To unify notation we put 
\begin{equation} \label{theta}
\theta ^{(u) } := \left\{ \begin{array}{ll}   \alpha_1^u\alpha_2^{-u} & G = \SL_3(q) \\ \alpha^u & G = \SU_3(q) . \end{array} \right.  
\end{equation}

\begin{remark} 
By Lemma \ref{Sum-Lemma}  (i) the orthogonal determinant of 
a $B$-stable character $\chi$ of $G$ is determined by the restriction
 of $\chi $ to $B$. 
 Decompose this restriction as 
 \begin{equation} \label{chiTchiU} 
 \mathrm{Res}^G_B(\chi)=\chi_T+\chi_U \end{equation} 
 where $\chi_T$ is the character of $T$ on the $U$-fixed space, also known as the \textit{Harish-Chandra restriction} of $\chi $. In particular its degree is $\chi_T(1)=\langle \mathrm{Res}^G_U(\chi), \mathbf{1}_U \rangle. $
\end{remark}
Note that for odd primes $p$ the character $\chi _U$ of $B$ is $U$-stable. As 
$p$ does not divide the discriminant of $\mathbb{Q}(\chi ) $ for all 
$\chi \in \mathrm{Irr}^+(G) $ we have $\mathbb{Q}({\chi_U})=\mathbb{Q}$
so Lemma \ref{p group Prop} gives the 
determinant of $\chi _U$:

\begin{remark} \label{RemChiTChiU}
Let $q$ be odd. If $\chi _T$ is orthogonally stable then 
$$\det(\chi ) = \det(\chi _T) p^{\chi_U(1)/(p-1)} $$
\end{remark}

Note that $T$ is abelian and so $\chi _T$ is a sum of linear characters.
If these characters are complex (i.e. of indicator '0') then $\chi_T$ is orthogonally stable and 
its determinant  can be computed from Lemma \ref{complex}.  
In fact the irreducible constituents of $\chi _T$ can be obtained from the action of the Weyl group $W$ 
on $\mathrm{Irr}(T)$.
It is well-known that 
$$ W \cong \left\{ \begin{array}{ll}    S_3 & \mbox{ for }  G=\mathrm{SL}_3(q)  \\ 
C_2  & \mbox{ for } G=\mathrm{SU}_3(q) \end{array} \right. $$

Let $\theta \in \mathrm{Irr}(T)$. Then $\theta$ can also be considered as a character of $B$. A character $\chi \in \mathrm{Irr}(G)$ is said to be in the principal series if $\chi$ appears in $\mathrm{Ind}^G_B(\theta)$ for some $\theta \in \mathrm{Irr} (T)$.  

We will need a special case of the well-known Mackey formula for Harish-Chandra induction and restriction:
\begin{lemma} (see \cite[Theorem 5.2.1]{digne_michel_2020})
\label{Mackey Formula}
$$
(\mathrm{Ind}^G_B(\theta))_T=\sum_{w \in W} w \cdot \theta
.$$
\end{lemma}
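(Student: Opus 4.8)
The plan is to unwind the two Harish-Chandra operations into ordinary induction and fixed-point functors, and then to run the classical Mackey decomposition together with the Bruhat decomposition of $G$. Recall that $\mathrm{Ind}^G_B(\theta)$ is formed by inflating $\theta$ to $B$ (trivially on $U$) and inducing, while the Harish-Chandra restriction of a $G$-character is the $T = B/U$-character on its $U$-fixed space, as in \eqref{chiTchiU}. Hence the left-hand side is the $T$-module $\big(\mathrm{Ind}^G_B(\theta)\big)^U$, and the whole statement reduces to identifying this fixed space.

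First I would apply the ordinary Mackey formula to $\mathrm{Res}^G_B\mathrm{Ind}^G_B(\theta)$. The double cosets $B\backslash G/B$ are parametrized by $W$ via the Bruhat decomposition $G=\coprod_{w\in W} B\dot w B$, where $\dot w\in N_G(T)$ is a chosen representative of $w$. Writing $B_w:=B\cap {}^{\dot w}B$, this yields
\[
\mathrm{Res}^G_B\mathrm{Ind}^G_B(\theta)=\sum_{w\in W}\mathrm{Ind}^B_{B_w}\big({}^{\dot w}\theta\big),
\]
where ${}^{\dot w}\theta$ is the character $x\mapsto\theta(\dot w^{-1}x\dot w)$ restricted to $B_w$.

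The decisive step is to take $U$-fixed points of each summand and show it contributes exactly the one-dimensional $T$-character $w\cdot\theta$. The structural facts I would use are that $\dot w$ normalizes $T$, so $B_w=T\ltimes(U\cap {}^{\dot w}U)$ with $UB_w=B$, and that ${}^{\dot w}\theta$ is trivial on $U\cap {}^{\dot w}U$ (because $\theta$ is trivial on $U$). A second application of Mackey, now for $U\le B\ge B_w$ with its single double coset $UB_w=B$, gives
\[
\mathrm{Res}^B_U\mathrm{Ind}^B_{B_w}\big({}^{\dot w}\theta\big)=\mathrm{Ind}^U_{U\cap {}^{\dot w}U}(\mathbf 1),
\]
whose space of $U$-invariants is one-dimensional by Frobenius reciprocity. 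Since $T\le B_w$ fixes this line and acts on it through ${}^{\dot w}\theta|_T = w\cdot\theta$, summing over $w\in W$ produces $\sum_{w\in W} w\cdot\theta$ (the precise left-versus-right Weyl convention is immaterial, as it only permutes the summation over $W$). The main obstacle is exactly this local analysis: one must verify the structural input $B\cap {}^{\dot w}B=T\ltimes(U\cap {}^{\dot w}U)$ and $UB_w=B$, and then correctly pin down the $T$-action on the invariant line; for $\SL_3(q)$ and $\SU_3(q)$ these can be checked directly from the explicit matrix forms of $U$, $T$ and the Weyl representatives $\dot w$.

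More economically, the identity is simply the special case $L=M=T$ of the general Mackey formula for Harish-Chandra functors in \cite[Theorem 5.2.1]{digne_michel_2020}: since $T$ is a maximal torus, ${}^{\dot w}T=T$ and $T\cap {}^{\dot w}T=T$ for every $w\in W$, so the intermediate Harish-Chandra induction and restriction between $T$ and itself are the identity, the twist collapses to $\mathrm{ad}(\dot w)=w\cdot(-)$, and the indexing set of relevant double cosets is all of $W$.
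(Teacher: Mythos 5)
Your argument is correct, but it is worth noting that the paper gives no proof of this lemma at all: it is quoted directly from the literature, with the citation \cite[Theorem 5.2.1]{digne_michel_2020} serving as the entire justification. Your closing observation --- that the statement is the special case $L=M=T$ of the general Mackey formula for Harish-Chandra induction and restriction, where the twist collapses to the Weyl-group action and the relevant double cosets are indexed by all of $W$ --- is therefore precisely the paper's route. What you add on top of that is a genuine, self-contained alternative: unwinding $\mathrm{Ind}^G_B$ and $(\cdot)_T$ into ordinary induction and $U$-fixed points, applying the classical Mackey formula along the Bruhat decomposition $G=\coprod_{w\in W}B\dot wB$, and then analysing each summand $\mathrm{Ind}^B_{B_w}({}^{\dot w}\theta)$ locally. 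That local analysis is sound: $T\subseteq B_w$ gives $UB_w=B$, so the second Mackey application has a single double coset; Frobenius reciprocity gives a one-dimensional $U$-fixed line; and since $T$ normalizes both $U$ and $U\cap{}^{\dot w}U$, it acts on that line through ${}^{\dot w}\theta|_T$, yielding $w\cdot\theta$ up to the (immaterial) left/right convention. The only inputs you leave unverified are the split BN-pair facts $B\cap{}^{\dot w}B=T\ltimes(U\cap{}^{\dot w}U)$ and $U\cap{}^{\dot w}B=U\cap{}^{\dot w}U$; these are standard for finite groups of Lie type, and they do hold for the quasi-split group $\SU_3(q)$ (whose relative Weyl group $C_2$ is the $W$ in question), though your suggestion to check them by explicit matrices is a reasonable fallback. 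In short: your elementary proof buys self-containedness at the cost of BN-pair bookkeeping, while the paper (and your last paragraph) buys brevity by invoking the general theorem.
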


\begin{corollary}
\label{Degree Characters}
Let $\chi \in \Irr(G)$. Then $0 \leq \chi_T(1) \leq |W|$, with $\chi_T(1)=0$ if and only if $\chi$ is not in the principal series, and $\chi_T(1)=|W|$ if and only if $\chi=\mathrm{Ind}^G_B(\theta)$ for some $\theta \in \Irr(T)$.
\end{corollary}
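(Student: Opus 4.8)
The plan is to reduce everything to the multiplicities of $\chi$ in the Harish--Chandra induced characters $\mathrm{Ind}^G_B(\theta)$, $\theta \in \Irr(T)$, and then to exploit Lemma \ref{Mackey Formula}. The starting point is the Harish--Chandra Frobenius reciprocity
$$\langle \psi_T, \theta \rangle_T = \langle \psi, \mathrm{Ind}^G_B(\theta) \rangle_G \qquad (\theta \in \Irr(T)),$$
valid for every character $\psi$ of $G$. I would justify this directly: $\psi_T$ is the character of $T$ on the $U$-fixed space $V^U$ of a module $V$ affording $\psi$, and since $U$ acts trivially on $\theta$ viewed as a character of $B$, every $B$-homomorphism from $\theta$ into $V$ has image in $V^U$; hence $\langle \psi_T, \theta\rangle_T = \dim \mathrm{Hom}_B(\theta, \mathrm{Res}^G_B(V)) = \langle \psi, \mathrm{Ind}^G_B(\theta)\rangle_G$ by ordinary Frobenius reciprocity.

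Summing this identity over all $\theta \in \Irr(T)$ and using $\psi_T(1) = \sum_{\theta} \langle \psi_T, \theta\rangle_T$ (as $T$ is abelian, every $\theta$ is linear) gives the key formula
$$\chi_T(1) = \sum_{\theta \in \Irr(T)} \langle \chi, \mathrm{Ind}^G_B(\theta) \rangle_G.$$
Every summand is a non-negative integer, so $\chi_T(1) \geq 0$, and $\chi_T(1) = 0$ holds exactly when $\chi$ is a constituent of no $\mathrm{Ind}^G_B(\theta)$, that is, when $\chi$ is not in the principal series. This settles the lower bound and the first equivalence.

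For the upper bound I would feed the adjunction back into the Mackey formula: for $\theta,\theta' \in \Irr(T)$,
$$\langle \mathrm{Ind}^G_B(\theta), \mathrm{Ind}^G_B(\theta') \rangle_G = \langle (\mathrm{Ind}^G_B(\theta))_T, \theta'\rangle_T = \big| \{ w \in W : w\theta = \theta' \} \big|.$$
In particular two induced characters lying over different $W$-orbits of $\Irr(T)$ share no irreducible constituent, so the characters $\theta$ contributing to the sum above all lie in a single orbit $W\theta_0$. Setting $\Phi := \sum_{\theta \in W\theta_0} \mathrm{Ind}^G_B(\theta)$, the formula gives $\langle \Phi, \Phi\rangle_G = |W\theta_0|\cdot |W|$, while $\chi_T(1) = \langle \chi, \Phi\rangle_G$. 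Cauchy--Schwarz then yields $\chi_T(1)^2 \leq \langle\chi,\chi\rangle_G\,\langle\Phi,\Phi\rangle_G = |W\theta_0|\cdot|W| \leq |W|^2$, i.e.\ $\chi_T(1) \leq |W|$.

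The most delicate point, and the one I expect to require the most care, is the equality characterisation $\chi_T(1) = |W|$. One direction is immediate from Mackey: if $\chi = \mathrm{Ind}^G_B(\theta)$ then $\chi_T = \sum_{w\in W} w\theta$ has degree $|W|$. For the converse, equality in the chain above forces both $|W\theta_0| = |W|$ (so $\theta_0$ has trivial $W$-stabiliser) and equality in Cauchy--Schwarz, whence $\Phi = |W|\chi$; comparing degrees (each $\mathrm{Ind}^G_B(\theta)$ has degree $[G:B]$, and $\Phi$ is supported on $\chi$ alone) then shows $\mathrm{Ind}^G_B(\theta_0) = \chi$. I would double-check this last step, as it is where the argument passes from a numerical coincidence back to an actual equality of characters.
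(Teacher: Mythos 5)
Your proof is correct, and it is built on the same foundation the paper intends: the paper states this corollary without any written proof, as an immediate consequence of Lemma \ref{Mackey Formula}, and your argument supplies precisely the two missing ingredients, namely the Harish--Chandra adjunction $\langle \psi_T,\theta\rangle_T=\langle \psi,\mathrm{Ind}^G_B(\theta)\rangle_G$ (correctly justified via the $U$-fixed space) and the Mackey formula itself. The one place where you use heavier machinery than needed is the upper bound and the equality case: Cauchy--Schwarz can be avoided entirely. Harish--Chandra restriction $\psi\mapsto\psi_T$ is additive and sends characters to characters (or zero), since it is the trace on the $T$-invariant subspace $V^U$; so if $\chi$ is a constituent of $\mathrm{Ind}^G_B(\theta)$, write $\mathrm{Ind}^G_B(\theta)=\chi+\psi$, and then $\chi_T(1)+\psi_T(1)=|W|$ by Mackey, giving $\chi_T(1)\leq|W|$ at once; equality forces $\psi_T(1)=0$, which by your adjunction formula is impossible for $\psi\neq 0$ because every constituent of $\mathrm{Ind}^G_B(\theta)$ lies in the principal series, whence $\chi=\mathrm{Ind}^G_B(\theta)$. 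Your own closing step is nevertheless sound as written; note also that once you know the orbit $W\theta_0$ is regular, you get $\langle \mathrm{Ind}^G_B(\theta_0),\mathrm{Ind}^G_B(\theta_0)\rangle_G=1$ from your displayed Mackey computation, so $\mathrm{Ind}^G_B(\theta_0)$ is irreducible and, containing $\chi$, must equal $\chi$ --- a shorter finish than the degree comparison you were worried about.
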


Explicit calculations with the character table \cite[Table 2]{Simpson1973TheCT} now give rise to the following propositions:
\begin{proposition}
\label{Main PropositionSL}
Let $G=\mathrm{SL}_3(q)$. The only characters in $\mathrm{Irr}^+(G)$ which are not in the principal series are $\chi_{rt}^{((q-1)u)}$ for $q$ odd. 
    \begin{itemize}
        \item[(a)]   $\theta^{(0)}= \mathbf{1}_T$ is the trivial character and  $$ \mathrm{Ind}^G_B(\theta^{(0)})= \mathbf{1}_G+2\chi_{qs} +\chi_{q^3}.$$
        \item[(b)] For $1 \leq u < q- 1$, $u \notin \{ (q-1)/3,(q-1)/2,  2(q-1)/3 \}$, we have that $$ \mathrm{Ind}^G_B(\theta^{(u)})=\chi_{st}^{(u,-u,0)}$$ and 
        $(\chi_{st}^{(u,-u,0)})_T(1)=6$, i.e. the $U$-fixed space 
        in $\chi_{st}^{(u,-u,0)}$ has dimension 6.
        \item[(c)] For $j \in \{ (q-1)/3, 2(q-1)/3 \}$, we have that $$ \mathrm{Ind}^G_B(\theta^{(j)})= \sum_{u=0}^2\chi_{st'}^{(u)}$$ 
        where $(\chi_{st'}^{(u)})_T(1)=2$ for $u=0,1,2$.
    \end{itemize}
    \end{proposition}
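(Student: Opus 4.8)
My plan is to reduce everything to the Harish--Chandra adjunction together with the Mackey formula of Lemma \ref{Mackey Formula}. For linear $\theta \in \Irr(T)$ we have $\deg \mathrm{Ind}^G_B(\theta) = [G:B] = (q+1)(q^2+q+1)$, and the adjunction gives, for $\theta, \theta' \in \Irr(T)$,
$$\langle \mathrm{Ind}^G_B(\theta), \mathrm{Ind}^G_B(\theta') \rangle_G = \langle \theta, (\mathrm{Ind}^G_B(\theta'))_T \rangle_T = |\{ w \in W \mid w \cdot \theta' = \theta \}|,$$
so that $\langle \mathrm{Ind}^G_B(\theta), \mathrm{Ind}^G_B(\theta) \rangle_G = |\mathrm{Stab}_W(\theta)|$. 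As $W \cong S_3$ this inner product lies in $\{1,2,3,6\}$ and determines both the number of irreducible constituents of $\mathrm{Ind}^G_B(\theta)$ and their multiplicities; degree-matching against the few degrees recorded in Theorem \ref{Irr+ list} then identifies each constituent with its Simpson--Frame label.

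The first computation I would carry out is the stabiliser of $\theta^{(u)}$. Writing $\Irr(T)$ from \eqref{SLTorusIrr} as weights $(x_1,x_2,x_3)$ taken modulo $(1,1,1)$ and modulo $q-1$, with $W = S_3$ permuting the coordinates, the character $\theta^{(u)}$ corresponds to $(u,0,-u)$. A short check shows that $\mathrm{Stab}_W(\theta^{(u)})$ is trivial for generic $u$, contains a transposition exactly when $2u \equiv 0 \pmod{q-1}$ (that is $u = (q-1)/2$), and equals a cyclic group $C_3$ exactly when $3u \equiv 0 \pmod{q-1}$ with $u \neq 0$ (that is $u \in \{(q-1)/3, 2(q-1)/3\}$), while $\theta^{(0)} = \mathbf{1}_T$ has stabiliser all of $W$. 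These are precisely the values singled out in (b) and (c).

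The three decompositions then follow from the inner product and the degrees. In case (a) the inner product is $6$ and $\mathbf{1}_G$ occurs with multiplicity one, so the remaining multiplicities satisfy $\sum_{\chi \neq \mathbf{1}_G} m_\chi^2 = 5$ while the remaining constituents have total degree $q^3 + 2q^2 + 2q$; this is realised only by $2\chi_{qs} + \chi_{q^3}$ (degrees $q(q+1)$ and $q^3$, with $2^2 + 1^2 = 5$), giving $\mathrm{Ind}^G_B(\theta^{(0)}) = \mathbf{1}_G + 2\chi_{qs} + \chi_{q^3}$. In case (b) the inner product is $1$, so $\mathrm{Ind}^G_B(\theta^{(u)})$ is irreducible of degree $(q+1)(q^2+q+1)$ and character field $\mathbb{Q}(\theta^{(u)}) = \mathbb{Q}(\vartheta_{q-1}^{(u)})$; the unique such character in the table is $\chi_{st}^{(u,-u,0)}$, and since its $T$-restriction is $\sum_{w \in W} w \cdot \theta^{(u)}$ with six distinct linear summands we obtain $(\chi_{st}^{(u,-u,0)})_T(1) = 6$. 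In case (c) the inner product is $3$, forcing three distinct constituents of multiplicity one whose degrees sum to $(q+1)(q^2+q+1)$; the only candidates are the three rational characters $\chi_{st'}^{(u)}$, $u = 0,1,2$, each of degree $\tfrac13 (q+1)(q^2+q+1)$. As $\mathrm{Stab}_W(\theta^{(j)}) = C_3$ the orbit of $\theta^{(j)}$ has size two, so $\sum_{w \in W} w \cdot \theta^{(j)} = 3(\theta^{(j)} + \overline{\theta^{(j)}})$; since each $\chi_{st'}^{(u)}$ is real its $T$-restriction contains $\theta^{(j)} + \overline{\theta^{(j)}}$, and the degree bookkeeping forces $(\chi_{st'}^{(u)})_T = \theta^{(j)} + \overline{\theta^{(j)}}$, i.e. $(\chi_{st'}^{(u)})_T(1) = 2$.

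It remains to treat the principal-series claim. Everything above shows that $\chi_{qs}, \chi_{q^3}, \chi_{st}^{(u,-u,0)}$ and $\chi_{st'}^{(u)}$ lie in the principal series, so only $\chi_{rt}^{((q-1)u)}$ (which belongs to $\Irr^+(G)$ only for $q$ odd) is left. By Corollary \ref{Degree Characters} it suffices to show $(\chi_{rt}^{((q-1)u)})_T(1) = 0$, i.e. that $\tfrac{1}{|U|}\sum_{g \in U} \chi_{rt}^{((q-1)u)}(g) = 0$, which I would verify directly from the values of $\chi_{rt}$ on the unipotent classes of $G$ and their sizes in \cite[Table 2]{Simpson1973TheCT}. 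I expect this last unipotent character sum, together with the careful matching of each induced constituent to the correct row of the Simpson--Frame table by degree and character field, to be the main obstacle: the representation-theoretic skeleton is clean, but it is exactly here that the explicit character table is indispensable.
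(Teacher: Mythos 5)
Your proposal is correct in its skeleton but takes a genuinely different route from the paper. The paper proves Propositions \ref{Main PropositionSL} and \ref{Main PropositionSU} simultaneously by working entirely on the unipotent side: it lists the conjugacy classes of $G$ meeting $U$, computes the intersection sizes $|C_i\cap U|$, and then evaluates $\chi_T(1)=\langle \mathrm{Res}^G_U(\chi),\mathbf{1}_U\rangle$ for each $\chi$ directly from the Simpson--Frame character values, reading off principal-series membership and the dimensions via Corollary \ref{Degree Characters}. You instead work on the torus side: Harish--Chandra adjunction combined with Lemma \ref{Mackey Formula} gives $\langle \mathrm{Ind}^G_B\theta,\mathrm{Ind}^G_B\theta'\rangle_G=|\{w\in W\mid w\theta'=\theta\}|$, and your stabiliser computation in $W\cong S_3$ (trivial generically, $C_2$ exactly at $u=(q-1)/2$, $C_3$ exactly at $u\in\{(q-1)/3,2(q-1)/3\}$, all of $S_3$ at $u=0$) is correct and reproduces precisely the case division (a)--(c), with the values $\chi_T(1)\in\{6,2\}$ falling out of orbit sizes rather than character sums. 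What your approach buys is conceptual transparency and independence from most of the table; what the paper's buys is uniformity (one computation covers $\mathrm{SL}_3$ and $\mathrm{SU}_3$ at once) and no matching step. Note that for the remaining claim --- that $\chi_{rt}^{((q-1)u)}$ is not in the principal series --- you fall back on exactly the paper's computation, so the character table cannot be avoided entirely.

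The step you must tighten is the identification of constituents ``by degree and character field''. Knowing $\sum_\chi m_\chi^2$ and the total degree does not pin down the constituents: in case (c), for example, $\mathbf{1}_G+\chi_{qs}+\chi'$, with $\chi'$ one of the degree-$q(q^2+q+1)$ irreducible characters of $\mathrm{SL}_3(q)$, is a competing triple of distinct multiplicity-one constituents with the correct total degree $(q+1)(q^2+q+1)$. Fortunately this is eliminated inside your own framework: your displayed inner-product formula shows $\langle\mathrm{Ind}^G_B\theta,\mathrm{Ind}^G_B\theta'\rangle_G=0$ whenever $\theta$ and $\theta'$ lie in different $W$-orbits, so principal series attached to distinct orbits share no constituents; in particular $\mathbf{1}_G$, $\chi_{qs}$ and $\chi_{q^3}$, being constituents of $\mathrm{Ind}^G_B\mathbf{1}_T$, cannot occur in $\mathrm{Ind}^G_B\theta^{(j)}$. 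A similar check (using the degree list of \cite[Table 2]{Simpson1973TheCT}) is needed in case (a) to rule out other patterns with $\sum m_\chi^2=5$, and in case (b) the character field alone does not separate $\chi_{st}^{(u,-u,0)}$ from $\chi_{st}^{(u',-u',0)}$ when $\mathbb{Q}(\vartheta_{q-1}^{(u)})=\mathbb{Q}(\vartheta_{q-1}^{(u')})$, so pinning the parameter $u$ requires comparing values (or invoking the parametrisation of principal series by $W$-orbits in $\mathrm{Irr}(T)$). With those points added, your argument is complete; your concluding deduction in (c) --- each $(\chi_{st'}^{(u)})_T$ contains $\theta^{(j)}+\overline{\theta^{(j)}}$ by adjunction, and the total $3(\theta^{(j)}+\overline{\theta^{(j)}})$ forces equality --- is clean and correct.
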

\begin{proposition}
\label{Main PropositionSU}
 Let $G=\mathrm{SU}_3(q)$. The only characters in $\mathrm{Irr}^+(G)$ which are not in the principal series are $\chi_{st}^{(u,-u,0)}$ and $\chi_{st'}^{(u)}$ for $q$ odd. 
    \begin{itemize}
        \item[(a)]  $\theta^{(0)}= \mathbf{1}_T$ is the trivial character and  $$ \mathrm{Ind}^G_B(\theta^{(0)})= \mathbf{1}_G+\chi_{q^3}.$$
        \item[(b)] For $1 \leq u < q+1$, we have that $$ \mathrm{Ind}^G_B(\theta^{((q+1)u)})=\chi_{rt}^{((q+1)u)}$$ and $(\chi_{rt}^{((q+1)u)})_T(1)=2$.
    \end{itemize}
\end{proposition}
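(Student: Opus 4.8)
The plan is to reduce the three claims to the combinatorics of the $W$-action on $\Irr(T)$, the Mackey formula (Lemma \ref{Mackey Formula}), and the degree bound of Corollary \ref{Degree Characters}. Since $W\cong C_2=\{1,w_0\}$ for $G=\SU_3(q)$, the first step is to make $w_0$ explicit on $\Irr(T)$. A representative of $w_0$ in $N_G(T)$ conjugates $\tau_a=\mathrm{diag}(\tau^a,\tau^{(q-1)a},\tau^{-qa})$ by reversing its diagonal entries, so it sends $\tau_a$ to $\tau_{-qa}$; this is a well-defined involution of $T$ because $(-q)^2\equiv 1\pmod{q^2-1}$. Consequently $w_0\cdot\alpha^u=\alpha^{-qu}$, and in particular $w_0\cdot\theta^{((q+1)u)}=\alpha^{-(q+1)u}$, using $-q(q+1)\equiv -(q+1)\pmod{q^2-1}$. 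Lemma \ref{Mackey Formula} then yields $(\mathrm{Ind}^G_B(\theta))_T=\theta+w_0\cdot\theta$ for all $\theta\in\Irr(T)$, which drives the whole argument.

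For part (a) I take $\theta=\theta^{(0)}=\mathbf 1_T$, so that $(\mathrm{Ind}^G_B(\mathbf 1_T))_T=2\cdot\mathbf 1_T$. By the adjunction between Harish-Chandra induction and restriction, $\langle\mathrm{Ind}^G_B(\mathbf 1_T),\mathrm{Ind}^G_B(\mathbf 1_T)\rangle=\langle\mathbf 1_T,(\mathrm{Ind}^G_B(\mathbf 1_T))_T\rangle=2$. As $\mathrm{Ind}^G_B(\mathbf 1_T)$ is the rational permutation character on $G/B$ of degree $[G:B]=q^3+1$, it contains $\mathbf 1_G$ with multiplicity one, and the norm $2$ forces exactly one further irreducible constituent, necessarily of degree $q^3$. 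Since the Steinberg character is the unique irreducible of degree $q^3=|U|$, it equals $\chi_{q^3}$, giving $\mathrm{Ind}^G_B(\theta^{(0)})=\mathbf 1_G+\chi_{q^3}$.

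For part (b) I set $\theta=\theta^{((q+1)u)}$. The computation above gives $w_0\cdot\theta=\alpha^{-(q+1)u}\neq\theta$ whenever $2u\not\equiv 0\pmod{q-1}$, which is the regular case occurring for $u$ in the stated range; the degenerate $w_0$-fixed character $\alpha^{(q^2-1)/2}$ must be treated separately, and one checks that its reducible induction contributes no member of $\Irr^+(G)$. For regular $\theta$ one has $\mathrm{Stab}_W(\theta)=1$, hence $\langle\mathrm{Ind}^G_B(\theta),\mathrm{Ind}^G_B(\theta)\rangle=|\mathrm{Stab}_W(\theta)|=1$, so $\mathrm{Ind}^G_B(\theta)$ is irreducible of degree $[G:B]=q^3+1$ with $(\mathrm{Ind}^G_B(\theta))_T(1)=2$. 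Reading the character field of $\theta+w_0\cdot\theta$ off as $\mathbb{Q}(\vartheta_{q-1}^{(u)})$ and comparing degree and field with the $\chi_{rt}$-row of Theorem \ref{Irr+ list} identifies $\mathrm{Ind}^G_B(\theta^{((q+1)u)})=\chi_{rt}^{((q+1)u)}$.

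It remains to show that $\chi_{st}^{(u,-u,0)}$ and $\chi_{st'}^{(u)}$ (for $q$ odd) are the only characters in $\Irr^+(G)$ outside the principal series; for $q$ even the list $\{\chi_{q^3}\}$ is exhausted by part (a). By Corollary \ref{Degree Characters} a character is outside the principal series exactly when $\chi_T(1)=\langle\mathrm{Res}^G_U(\chi),\mathbf 1_U\rangle=0$. Evaluating the relevant class values from \cite[Table 2]{Simpson1973TheCT} and summing over $U$, I would verify $\sum_{g\in U}\chi_{st}^{(u,-u,0)}(g)=\sum_{g\in U}\chi_{st'}^{(u)}(g)=0$, while the same sums are positive for $\chi_{q^3}$ and the $\chi_{rt}$ found above; conceptually this is the Ennola-dual of the statement that $\chi_{st}$ and $\chi_{st'}$ are the cuspidal characters. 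I expect the main obstacle to be exactly this character-table bookkeeping: matching the Simpson--Frame parametrisation to the torus characters $\alpha^u$, evaluating the Harish-Chandra restrictions and the sums over $U$, disposing of the degenerate $w_0$-fixed character in part (b), and confirming that the principal series constituents just produced, together with the two cuspidal families, account for all of $\Irr^+(G)$.
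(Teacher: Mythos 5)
Your treatment of parts (a) and (b) is a genuinely different route from the paper's, and that part of it is essentially sound. The paper proves both Proposition \ref{Main PropositionSL} and Proposition \ref{Main PropositionSU} by one brute-force computation: it lists the conjugacy classes meeting $U$, computes the intersection numbers ($|C_1\cap U|=1$, $|C_2\cap U|=q-1$, $|C_3^{(l)}\cap U|=(q^3-q)/d$ for $\SU_3(q)$), and evaluates $\chi_T(1)=\langle \mathrm{Res}^G_U(\chi),\mathbf{1}_U\rangle$ for each $\chi\in\Irr^+(G)$ directly from the Simpson--Frame table, feeding the result into Corollary \ref{Degree Characters}. You instead derive (a) and (b) structurally: the explicit action $w_0\cdot\alpha^u=\alpha^{-qu}$, Lemma \ref{Mackey Formula}, and adjunction of Harish-Chandra induction and restriction give the norms $\langle \mathrm{Ind}^G_B(\theta),\mathrm{Ind}^G_B(\theta)\rangle$, hence the decomposition in (a) and the irreducibility in (b) without touching unipotent class data. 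Your computation of the $w_0$-action and of the norms is correct, and your observation that the $w_0$-fixed character $\alpha^{(q^2-1)/2}$ splits into two constituents outside $\Irr^+(G)$ is true (their degrees $q^2-q+1$ and $q(q^2-q+1)$ are odd); note in passing that $u=q-1$ also lies in the stated range and gives $\theta^{(0)}$, so the range in (b) must be read modulo the identifications implicit in Theorem \ref{Irr+ list}.

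There are, however, two genuine gaps. The first is the opening claim of the proposition, that $\chi_{st}^{(u,-u,0)}$ and $\chi_{st'}^{(u)}$ are the \emph{only} members of $\Irr^+(G)$ not in the principal series. You explicitly defer this to ``character-table bookkeeping'' you ``would verify'' --- but this is not a peripheral detail; it is exactly where the paper's proof does its work. One needs the class-intersection numbers above together with the values of $\chi_{st}^{(u,-u,0)}$ and $\chi_{st'}^{(u)}$ on the unipotent classes to show, as the paper does for $\chi_{st}^{(u,-u,0)}$, that
$\frac{1}{q^3}\left((q-1)(q^2-q+1)+(q-1)(2q-1)-(q^3-q)\right)=0$,
and then Corollary \ref{Degree Characters} gives cuspidality. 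Your argument as written establishes (a), (b), and that $\chi_{rt}^{((q+1)u)}$ and $\chi_{q^3}$ lie in the principal series, but proves nothing about the characters that do not.

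The second gap is the identification in (b). Irreducibility plus the degree $q^3+1$ does place $\mathrm{Ind}^G_B(\theta^{((q+1)u)})$ in the $\chi_{rt}$ family, but ``comparing degree and field'' cannot pin down the superscript: distinct admissible parameters $u,u'$ with $\gcd(u,q-1)=\gcd(u',q-1)$ yield the same character field $\mathbb{Q}(\vartheta_{q-1}^{(u)})$, so the field does not separate members of the family. To get $\mathrm{Ind}^G_B(\theta^{((q+1)u)})=\chi_{rt}^{((q+1)u)}$ with the correct index one must compare values on the classes meeting $T$ (equivalently, apply Frobenius reciprocity against the entries of \cite[Table 2]{Simpson1973TheCT}) --- which is again precisely the table work you set aside.
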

\begin{proof}
We will handle both $G=\mathrm{SL}_3(q)$ and $G=\mathrm{SU}_3(q)$ simultaneously. Let $d=\gcd(q-\varepsilon,3)$. There are $2+d$ conjugacy classes of $G$ which have a non-empty intersection with $U$: $C_1$, $C_2$ and $C_3^{(l)}$, $0 \leq l \leq d-1$, which can be characterised by $\mathrm{rank}(g_i-I_3)=i-1$ for $g_i \in C_i, 1 \leq i \leq 3$. We further calculate that $|C_1 \cap U|=1$, and
\begin{align*}
&|C_2 \cap U|=  \left\{ \begin{array}{ll}   2q^2-q-1 & \mbox{ for }  G=\mathrm{SL}_3(q)  \\ 
q-1 & \mbox{ for } G=\mathrm{SU}_3(q) \end{array} \right. \\
&|C_3^{(l)} \cap U| = \left\{ \begin{array}{ll}   1/d(q^3-2q^2+q) & \mbox{ for }  G=\mathrm{SL}_3(q)  \\ 
1/d(q^3-q) & \mbox{ for } G=\mathrm{SU}_3(q) . \end{array} \right. 
\end{align*}

We will, as an example, calculate $(\chi_{st}^{(u,-u,0)})_T(1)=\langle \mathrm{Res}^G_U(\chi_{st}^{(u,-u,0)}), \mathbf{1}_U \rangle$. For $\SL_3(q)$, we see that
$$
(\chi_{st}^{(u,-u,0)})_T(1)=\frac{1}{q^3} \left(  (q+1)(q^2+q+1)+(2q^2-q-1)(2q+1) +(q^3-2q^2+q) \right)=6,
$$
whereas for $\SU_3(q)$, the calculation becomes
$$
(\chi_{st}^{(u,-u,0)})_T(1)=\frac{1}{q^3} \left(  (q-1)(q^2-q+1)+(q-1)(2q-1) -(q^3-q) \right)=0.
$$
The rest of the propositions is handled analogously.
\end{proof}

We are now ready to give the main result.

\begin{theorem}
\begin{itemize}
    \item[(i)] Let $q$ be odd. 
\begin{center}
\begin{tabular}{|c | c |c|} 
 \hline
 $\det(\chi)$ for $G=\mathrm{SL}_3(q)$ & $\chi$ & $\det(\chi)$ for $G=\mathrm{SU}_3(q)$ \\  
 \hline\hline
 $3q \cdot (\mathbb{Q}^{\times})^2$  & $\chi_{st'}^{(u)}$ & $q \cdot (\mathbb{Q}^{\times})^2$ \\
 \hline
 $q(2-\vartheta_{q-1}^{(2u)}) \cdot (\mathbb{Q}(\vartheta_{q-1}^{(u)})^{\times})^2$  & $\chi_{st}^{(u,-u,0)}$ & $q \cdot (\mathbb{Q}(\vartheta_{q+1}^{(u)})^{\times})^2$ \\
 \hline
 $q \cdot (\mathbb{Q}(\vartheta_{q+1}^{(u)})^{\times})^2$  & $\chi_{rt}^{((q-\varepsilon)u)}$ & $q (2-\vartheta_{q-1}^{(2u)})\cdot (\mathbb{Q}(\vartheta_{q-1}^{(u)})^{\times})^2$ \\
 \hline
\end{tabular}
\end{center}
Additionally, for $G=\mathrm{SL}_3(q)$, $\det(\chi_{(qs)})=(q^2+q+1)\cdot (\mathbb{Q}^{\times})^2$.
\item[(ii)] Let $q$ be even. 
\begin{center}
\begin{tabular}{|c | c |c|} 
 \hline
 $\det(\chi)$ for $G=\mathrm{SL}_3(q)$ & $\chi$ & $\det(\chi)$ for $G=\mathrm{SU}_3(q)$ \\  
 \hline\hline
 $(q+1)(q^2+q+1) \cdot (\mathbb{Q}^{\times})^2$  & $\chi_{q^3}$ & $(q^3+1) \cdot (\mathbb{Q}^{\times})^2$ \\
 \hline
\end{tabular}
\end{center}
Additionally, for $G=\mathrm{SL}_3(q)$, $\det(\chi_{(qs)})=(q^2+q+1)\cdot (\mathbb{Q}^{\times})^2$.
\end{itemize}
\end{theorem}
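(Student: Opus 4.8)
The plan is to partition $\mathrm{Irr}^+(G)$ according to Propositions \ref{Main PropositionSL} and \ref{Main PropositionSU} into the Borel-stable characters (those lying in the principal series, for $q$ odd) and the remaining ones (the non-principal-series characters for $q$ odd, together with every character occurring for $q$ even), and to treat these two families by the two methods advertised in the introduction.

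For the Borel-stable characters I would invoke Remark \ref{RemChiTChiU}, which reduces $\det(\chi)$ to $\det(\chi_T)\cdot p^{\chi_U(1)/(p-1)}$. First I would compute $\chi_T$ from Lemma \ref{Mackey Formula} as the $W$-orbit sum $\sum_{w\in W} w\cdot\theta$ of the inducing character $\theta$ read off from the relevant proposition; since $T$ is abelian these are linear and, away from the degenerate values of $u$, come in complex-conjugate pairs of indicator-$0$ characters. Applying Lemma \ref{complex} with $\delta = 2-\vartheta_m^{(2u)}$ to each pair and Lemma \ref{Sum-Lemma}(ii) to assemble them, the (odd number of) identical pairs collapse modulo squares to the single factor $2-\vartheta_{q-\varepsilon}^{(2u)}$ in the generic case, and to $\delta=3$ in the small-orbit case producing $\chi_{st'}^{(u)}$, where $\vartheta$ evaluates to $-1$ at a primitive third root of unity. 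The remaining factor is the $p$-power $p^{\chi_U(1)/(p-1)}$ from Lemma \ref{p group Prop}, with $\chi_U(1)=\chi(1)-\chi_T(1)$; writing $q=p^f$ and reducing modulo squares, this equals $q$ exactly when $\chi_U(1)/(p-1)\equiv f \pmod 2$, an elementary parity identity I would verify case by case. Multiplying the two factors reproduces the entries $q(2-\vartheta_{q-1}^{(2u)})$, $3q$, and $q(2-\vartheta_{q-1}^{(2u)})$ of the table.

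For the characters of rational field that are not Borel-stable, and for every character in the $q$-even case, where Remark \ref{RemChiTChiU} is unavailable because $p=2$, I would realise $\chi$ inside an explicit rational permutation module: the projective plane ($|M|=q^2+q+1$) for $\chi_{qs}$, the flag variety $G/B$ ($|M|=(q+1)(q^2+q+1)$) for $\chi_{q^3}$ in $\SL_3(q)$, and the $q^3+1$ isotropic points for $\chi_{q^3}$ in $\SU_3(q)$, whose permutation characters are given by Propositions \ref{Main PropositionSL}(a) and \ref{Main PropositionSU}(a), together with the analogous module supplied by the character table for the rational-field character $\chi_{st'}^{(u)}$ of $\SU_3(q)$. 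Restricting the form to $V_1^{\perp}$ and using Lemma \ref{Permutation repr}, I would split off the isotypic blocks: an even-multiplicity constituent such as $2\chi_{qs}$ contributes a square by Lemma \ref{Sum-Lemma}(ii) and drops out, so the surviving block yields $\det(\chi)=|M|\cdot(\mathbb{Q}^{\times})^2$, matching $q^2+q+1$, $(q+1)(q^2+q+1)$, $q^3+1$ and $q$.

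The genuine difficulty, and the step I expect to be the main obstacle, is the non-Borel-stable characters of odd $q$ whose field $K=\mathbb{Q}(\vartheta_{q+1}^{(u)})$ is not rational, namely $\chi_{rt}^{((q-1)u)}$ for $\SL_3(q)$ and $\chi_{st}^{(u,-u,0)}$ for $\SU_3(q)$, because Lemma \ref{Permutation repr} only produces a rational determinant: descending the rational form on the whole Galois-orbit block to $K$ and splitting into conjugate pieces recovers merely $N_{K/\mathbb{Q}}(\det(\chi))$ through the transfer and discriminant formula, which does not by itself pin a class in $K^{\times}/(K^{\times})^2$. I would resolve this by exploiting that such $\chi$ have no $U$-fixed vectors (Corollary \ref{Degree Characters}), so that $\chi$ is in fact $U$-stable: since $U$ is an odd-order $p$-group its only real irreducible character is the trivial one, whence $\mathrm{Res}^G_U(\chi)$ is a sum of complex-conjugate pairs and hence orthogonally stable. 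Granting that $\mathrm{Res}^G_U(\chi)$ is rational-valued, which I would confirm from the integrality of the character values on unipotent classes, Lemma \ref{Sum-Lemma}(i) together with Lemma \ref{p group Prop} give $\det(\chi)=p^{\chi(1)/(p-1)}\cdot(K^{\times})^2\equiv q$ after the same parity check, with the permutation-module norm serving as an independent consistency test. The crux is therefore to reconcile the rational permutation computation with the non-rational field $K$ and to verify the rationality and parity statements uniformly in $q$ and $u$.
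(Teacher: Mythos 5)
Your proposal follows the paper's proof almost step for step: the principal-series characters for odd $q$ are handled via Remark \ref{RemChiTChiU}, Lemma \ref{Mackey Formula} and Lemma \ref{complex} (with the collapse of an odd number of pairs sharing the same $\delta$, and $\delta=3$ in the case of $\chi_{st'}^{(u)}$ for $\SL_3(q)$), together with the parity computation giving $\det(\chi_U)=q$; the characters $\chi_{qs}$ and $\chi_{q^3}$ are handled via the permutation modules on $G/P$ and $G/B$ and Lemma \ref{Permutation repr}, splitting off $2\chi_{qs}$ as a square; and the non-principal-series characters with field $\mathbb{Q}(\vartheta_{q+1}^{(u)})$ are handled exactly as in the paper, by observing that the absence of $U$-fixed vectors makes $\chi$ $U$-stable, that $\mathrm{Res}^G_U(\chi)$ is rational, and then applying Lemma \ref{Sum-Lemma}(i) and Lemma \ref{p group Prop}.

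There is, however, one genuine gap: the treatment of $\chi_{st'}^{(u)}$ for $G=\SU_3(q)$, $q$ odd. You route this character through ``the analogous module supplied by the character table'', i.e.\ you posit a $G$-set $M$ with $|M|\equiv q$ modulo squares whose permutation character is $\mathbf{1}_G+\chi_{st'}^{(u)}$ up to orthogonally stable constituents of square determinant. No such module is exhibited, and none is available from the results you cite: by Proposition \ref{Main PropositionSU} the characters $\chi_{st'}^{(u)}$ of $\SU_3(q)$ are \emph{not} in the principal series, so they do not occur in $\mathrm{Ind}_B^G(\theta)$ for any $\theta\in\Irr(T)$, and neither the paper nor your propositions supply any other permutation character containing them; as written this step fails. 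The repair costs nothing, because your own argument for $\chi_{st}^{(u,-u,0)}$ of $\SU_3(q)$ applies verbatim: $\chi_{st'}^{(u)}$ is likewise not in the principal series, hence has no $U$-fixed vectors by Corollary \ref{Degree Characters}, is therefore $U$-stable with rational restriction, and Lemma \ref{Sum-Lemma}(i), Lemma \ref{p group Prop} and the parity check give $\det(\chi_{st'}^{(u)})=q\cdot(\mathbb{Q}^{\times})^2$. This is in fact how the paper treats all of $\mathrm{Irr}^+(G)\setminus\{\chi_{qs}\}$ for odd $q$ uniformly, namely $\det(\chi)=\det(\chi_T)\,p^{\chi_U(1)/(p-1)}$ with $\chi_T=0$ in the non-principal-series cases. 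A smaller inaccuracy: your opening identification of ``Borel-stable'' with ``principal series'' is not correct, since $\chi_{qs}$ lies in the principal series ($\mathrm{Ind}_B^G(\mathbf{1}_T)=\mathbf{1}_G+2\chi_{qs}+\chi_{q^3}$) yet $(\chi_{qs})_T=2\cdot\mathbf{1}_T$ is not orthogonally stable, so $\chi_{qs}$ is not Borel-stable; your computations are unaffected because you treat $\chi_{qs}$ by the projective plane anyway, but the stated partition should be corrected.
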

\begin{proof}
We first deal with the case that $q$ is odd: 
Then, for all $\chi \in \mathrm{Irr}^+(G) \backslash \{ \chi_{qs} \}$,
	the value $\chi_U(1)/(p-1)$ is even if and only if 
$q$ is an even power of $p$. So 
Lemma \ref{p group Prop} gives us that 
$\det(\chi_{U})=q \cdot (\mathbb{Q}^{\times})^2$.

For the characters 
$\chi_{st}^{(u,-u,0)}$ of $\mathrm{SL}_3(q)$ Lemma \ref{Mackey Formula} and Proposition \ref{Main PropositionSL} give that  $$(\chi_{st}^{(u,-u,0)})_T=\theta_1+\overline{\theta}_1 +  \theta_2+\overline{\theta}_2 + \theta_3+\overline{\theta}_3$$ is the sum of $6$ one-dimensional characters of $T$, where $\theta_1=\theta^{(u)}$,
$\theta_2=\alpha_{1}^{2u} \alpha_{2}^{u}$,  and $\theta_3=\alpha_{1}^{u} \alpha_{2}^{2u}$.
By Lemma \ref{Sum-Lemma} and Lemma \ref{complex} 
$$\det((\chi_{st}^{(u,-u,0)})_T)=(2-\vartheta_{q-1}^{(2u)}) \cdot (\mathbb{Q}(\vartheta_{q-1}^{(u)})^{\times})^2$$ 
and
\begin{align*}
\det(\chi_{st}^{(u,-u,0)})=&\det((\chi_{st}^{(u,-u,0)})_T) \det((\chi_{st}^{(u,-u,0)})_U) \cdot (\mathbb{Q}(\vartheta_{q-1}^{(u)})^{\times})^2= \\
& q(2-\vartheta_{q-1}^{(2u)}) \cdot (\mathbb{Q}(\vartheta_{q-1}^{(u)})^{\times})^2.    
\end{align*}
The results for $\chi_{rt}^{((q-\varepsilon)u)}$ for $\mathrm{SU}_3(q)$ and $\chi_{st'}^{(u)}$ follow similarly. Note here that $$2-\vartheta_{q-1}^{(2 (q-1)/3)}=2-(-1)=3.$$

The character $\chi_{qs}$ for $G=\mathrm{SL}_3(q)$  ($q$ even or odd) occurs in the permutation character $\psi := 1_P^G$ induced from the parabolic subgroup 
$$
P \coloneqq \left\{   
\begin{pmatrix}
a & b & c \\
d & e & f \\
0 & 0 & g
\end{pmatrix} \in G
\right\}
.$$  It is not hard to see that $|G/P|=q^2+q+1$ and that $\psi=\mathbf{1}_G+\chi_{qs}$ (cf \cite{Steinberg}).  
Lemma \ref{Permutation repr} hence yields $$\det(\chi_{qs})=(q^2+q+1) \cdot (\mathbb{Q}^{\times})^2.$$

To finish the proof let $q$ be even and regard the characters $\chi_{q^3}$. 
In both cases these appear in the permutation character $\phi := 1_B^G$. 

For $G=\mathrm{SU}_3(q)$ we get  $|G/B|=q^3+1$ and $\phi = \mathbf{1}_G+\chi_{q^3}$
so $\det(\chi_{q^3}) = q^3+1$ 
by Lemma \ref{Permutation repr}. 

For $G=\mathrm{SL_3}(q)$ we get $|G/B|=(q+1)(q^2+q+1)$ and $\phi = \mathbf{1}_G+2\chi_{qs}+\chi_{q^3}$. As $\chi _{qs}$ is orthogonally stable we get 
$$(q+1)(q^2+q+1) \cdot (\mathbb{Q}^{\times})^2 = \det(\chi_{qs})^2\det(\chi_{q^3}) \cdot (\mathbb{Q}^{\times})^2 = \det(\chi_{q^3}) \cdot (\mathbb{Q}^{\times})^2,$$ which finishes the proof.
\end{proof}

\bibliography{References}
\end{document}